\newtheorem{thm}{Theorem}
\newtheorem{lem}[thm]{Lemma}
\newtheorem{cor}[thm]{Corollary}
\title{On the cogrowth of Thompson's group $F$\footnote{
The first author acknowledges support from ARC projects DP110101104 and FT110100178.
 The second author thanks NSERC of Canada for financial support.}}
\author[1]{Murray Elder}
\author[2]{Andrew Rechnitzer}
\author[2]{Thomas Wong}
\affil[1]{School of Mathematical \& Physical Sciences, The~University~of~Newcastle}
\affil[2]{Department of Mathematics, University of British Columbia}
\begin{document}
\maketitle

%\subjclass[2010]{20F65, 05A05}
%\keyword{Cogrowth, amenable group, Richard Thompson's group $F$, Fekete's lemma}

\abstract{We  investigate the cogrowth  and distribution of geodesics in R. Thompson's group $F$.}

\section{Introduction}

In this article we study the cogrowth  and distribution of geodesics in  Richard Thompson's group $F$, in an attempt to decide experimentally whether or not  $F$ is amenable.

The {\em cogrowth} of a finitely generated group $G$ is defined as follows. Suppose $S=\{a_1,\ldots, a_k\}$ generates $G$\footnote{Formally, we consider $G$ as the epimorphic image from the free monoid generated by 
$S\cup S^{-1}$, rather than $S\cup S^{-1}$ as being a subset of $G$}, %\makethankshere
and 
consider the Cayley graph $\mathcal{G}$ of $(G,S)$. 
 Let $r_n$ be the number of
paths in this graph of length $n$ starting and ending at the identity
element --- let us call such paths {\em returns}. Since we can concatenate any two
such paths to get another we have
\begin{align}
  r_n r_k \leq r_{n+k}
\end{align}
and then by Fekete's lemma (see, for example, \cite{vanLint2001}) 
\begin{align}
  \rho = \limsup_{n \to \infty} r_n^{\nicefrac{1}{n}}
\end{align}
exists. This constant is called the {\em cogrowth} for $(G,S)$. Since we consider generators and their inverses to label distinct edges in $\mathcal G$,
 then  $\rho \leq 2k$.

The connection between this growth rate and amenability was established by Grigorchuk and independently by Cohen:
\begin{thm}[\cite{Grigorchuk1980,Cohen1982}]
\label{thm:amenable}
Let $G,S$ and
$\rho$ be as above.  $G$ is amenable if and only if $\rho= 2k$.
\end{thm}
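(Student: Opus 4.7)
The plan is to identify the quantity $\rho/(2k)$ with the spectral radius of the simple random walk on the Cayley graph, and then invoke Kesten's amenability criterion.

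First I would observe that the normalized counts $p_n := r_n/(2k)^n$ are precisely the return probabilities at time $n$ for the simple random walk on $\mathcal{G}$ that at each step picks a letter uniformly from $S \cup S^{-1}$. Letting $P$ denote the associated Markov operator on $\ell^2(G)$, the operator $P$ is self-adjoint (since $S\cup S^{-1}$ is symmetric) and convolution by a bounded positive kernel, so its spectral radius equals its operator norm. The standard formula $\|P\|=\limsup_n p_n^{1/n}$ (obtained from $\langle P^n \delta_e,\delta_e\rangle = p_n$ together with Gelfand's spectral radius formula applied on the generator-by-generator support) then gives $\|P\| = \rho/(2k)$. Thus the equivalence to be proved becomes $\|P\|=1 \iff G$ is amenable, which is exactly Kesten's criterion.

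For the easy direction ($G$ amenable $\Rightarrow \rho = 2k$), I would use Følner sets. Given a finite $F \subseteq G$ with $|sF \triangle F|/|F| < \varepsilon$ for every $s \in S\cup S^{-1}$, the normalized indicator $f = \mathbf{1}_F/\sqrt{|F|} \in \ell^2(G)$ satisfies $\|Pf - f\|_2 < \varepsilon'$, so $\|P\|\geq 1-\varepsilon'$. Since always $\|P\|\leq 1$, we get $\|P\|=1$ and hence $\rho = 2k$.

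The harder direction ($\rho = 2k \Rightarrow G$ amenable) is the core of Kesten's theorem. Assuming $\|P\|=1$, one produces a sequence of unit vectors $f_n\in \ell^2(G)$ with $\|Pf_n-f_n\|_2 \to 0$, i.e., almost-invariant vectors for the left regular representation. A standard argument (Day/Hulanicki) replacing $f_n$ by $|f_n|$ and then by $|f_n|^2\in \ell^1(G)$ converts these into almost-invariant probability measures, whose level sets $\{g : |f_n(g)|^2 > t\}$ yield a Følner sequence (via an averaging / coarea argument on $t\in(0,\infty)$). This produces a Følner sequence in $G$ and hence amenability. I expect this last step---the passage from spectral to Følner data---to be the main obstacle, since it requires the nontrivial representation-theoretic input that weak containment of the trivial representation in the regular representation is equivalent to amenability; the rest of the argument is essentially bookkeeping with norms and generating-function identities.
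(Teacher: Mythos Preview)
The paper does not actually prove this theorem; it is quoted as a known result of Grigorchuk and Cohen, with only the citation given. Your sketch, reducing the statement to Kesten's spectral criterion by identifying $\rho/(2k)$ with the norm of the Markov operator $P$ on $\ell^2(G)$, is the standard derivation and is correct at the level of detail you provide. One point worth tightening: the equality $\|P\|=\limsup_n p_n^{1/n}$ does not follow from Gelfand's formula alone (which yields $\|P\|=\lim_n \|P^n\|^{1/n}$); one also needs that the vector state $\langle\,\cdot\,\delta_e,\delta_e\rangle$ is faithful on the group von Neumann algebra, so that the spectral measure of $P$ at $\delta_e$ has support reaching $\pm\|P\|$. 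This is precisely where the homogeneity of the Cayley graph is used, and it is the only place your write-up is a bit hand-wavy.
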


Let $p_n$ be the number of returns of length $n$ on $\mathcal{G}$ which do not
contain immediate reversals. Again  concatenation shows that $p_n$ is supermultiplicative so Fekete's lemma gives
\begin{align}
  \alpha &= \limsup_{n \to \infty} p_n^{\nicefrac{1}{n}} 
\end{align}
exists. In this case since there are $2k(2k-1)^{n-1}$ freely reduced words of length $n$ in the $2k$ generators and their inverses, we have $\alpha\leq 2k-1$. 

The previous theorem can then be restated as:
\begin{thm}[\cite{Grigorchuk1980,Cohen1982}]
\label{thm:amenable_reduced}
Let $G,S$ and
$\alpha$ be as above.
 $G$ is amenable if and only if $\alpha= 2k-1$.
\end{thm}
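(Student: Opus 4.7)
The plan is to reduce Theorem \ref{thm:amenable_reduced} to Theorem \ref{thm:amenable} by establishing the purely analytic equivalence
\[
\rho = 2k \iff \alpha = 2k-1,
\]
after which the desired if-and-only-if is immediate. Trivially $p_n \leq r_n$, hence $\alpha \leq \rho$, but to obtain a sharp two-sided relation I would work with the generating functions $R(z) = \sum_n r_n z^n$ and $P(z) = \sum_n p_n z^n$, whose radii of convergence are $1/\rho$ and $1/\alpha$ respectively.

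The key step is a bijective decomposition of closed walks. Every closed walk $w$ at the identity has a unique free reduction $\bar w$, which is itself a closed walk counted by $p_{|\bar w|}$. Conversely, $w$ can be recovered from $\bar w$ by inserting \emph{bubbles}---closed sub-walks that freely reduce to the empty word---between consecutive letters of $\bar w$, with the proviso that no inserted bubble may trigger further cancellations with the surrounding letters of $\bar w$ or with adjacent bubbles. Because bubbles lift to walks in the free group of rank $k$, they depend only on $k$ and not on $G$, and the decomposition yields a functional equation of the form
\[
R(z) \;=\; A(z)\cdot P\bigl(z\,B(z)\bigr),
\]
where $A(z)$ and $B(z)$ are explicit algebraic functions associated with ``boundary'' and ``interior'' bubble insertions, computable via first-return analysis on the $2k$-regular tree.

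Finally, I would locate the dominant singularity of $R(z)$: it is the smaller of the common singularity of $A(z)$ and $B(z)$ (at the free group value $z = 1/(2\sqrt{2k-1})$) and the pre-image under $z \mapsto zB(z)$ of the singularity $1/\alpha$ of $P$. Substituting $\alpha = 2k-1$ into the resulting relation between $\rho$ and $\alpha$ gives $\rho = 2k$, and monotonicity of $z \mapsto zB(z)$ on $[0, 1/(2\sqrt{2k-1}))$ upgrades this to an if-and-only-if statement; combined with Theorem \ref{thm:amenable} the result follows. The main obstacle is formulating the bubble decomposition so that it is genuinely bijective: one must specify exactly which adjacent letters are forbidden to cancel, so that no walk is counted twice and none is missed. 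Once this combinatorial accounting is in place the generating function manipulations and the final equivalence are routine.
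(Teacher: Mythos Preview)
Your proposal is correct and follows essentially the same route the paper takes: the paper does not prove Theorem~\ref{thm:amenable_reduced} from scratch but presents it as a restatement of Theorem~\ref{thm:amenable}, and the bridge between the two is exactly the generating-function identity of Kouksov (Lemma~\ref{lem:RC}) together with the singularity relation $\rho=(\alpha^2+3)/\alpha$ in Corollary~\ref{cor:transfer}. Your bubble-insertion bijection is the same combinatorial idea the paper sketches immediately after Lemma~\ref{lem:RC}, namely replacing each edge $s$ of a freely reduced return by a returning walk in the free group that does not begin with $s^{-1}$.
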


Note that $\limsup$s are required since, for example, if $G$ has a presentation where all relators have even length, the number of returns of odd length (with or without immediate reversals) is 0.

In this article we compute bounds on the cogrowth rates of a number of 2-generator groups: Thompson's group $F$, the free and free abelian groups on 2 generators, Baumslag-Solitar groups, and various wreath products. Each of these examples, apart from $F$, is known to be either amenable or non-amenable. We compare the data obtained for $F$ against these examples, to see whether $F$ behaves more like an amenable  or a non-amenable group.

The question of the amenability of Thompson's group  $F$ has captivated many researchers for some time, initially since $F$ has exponential growth but no nonabelian free subgroups, making it a prime candidate for a counterexample to von Neumann's conjecture that a group is non-amenable if and only if it contains a nonabelian free subgroup.  In 1980 Ol'shanskii constructed a finitely generated non-amenable group with no nonabelian free subgroups \cite{Olshan1980}, and in 1982 Adyan gave further examples \cite{Adyan}. In 2002 Ol'shanskii and Sapir constructed finitely presented examples \cite{Sapir2002}. In spite of these results the amenability or non-amenability of $F$ remains an intensely studied problem.

In the second half of the article we extend our techniques to study the distribution of geodesic words in Thompson's group. 

This work is in the same spirit as previous papers by Burillo, Cleary and Wiest  \cite{Burillo2007}, and Arzhantseva, Guba,  Lustig, and Pr\'eaux \cite{AGLP},
who also applied computational techniques to consider the amenability of $F$.
We refer the reader to these papers for more background on Thompson's group 
and the problem of deciding its amenability computationally.

The  article is organised as follows. 
 In Section~2 we compute rigorous lower bounds on the cogrowth by
computing the dominant eigenvalue of the adjacency matrix of truncated Cayley
graphs. We then extrapolate these bounds to estimate the cogrowth and compare
and contrast those extrapolations for $F$ and other groups.
 In Section~3 we use a weighted random sampling of random words in the
generators to estimate the exponential growth rate of trivial words in several
different groups. As a byproduct we estimate the distribution of geodesic
lengths as a function of word-length. %Again, we compare and contrast the results for Thompson's group with several other groups.

\section{Bounding returns and cogrowth}

\subsection{Bounding the number of returns}
Consider the Cayley graph $\mathcal{G}$ of some group $G$ with finite generating set --- for the discussion at
hand, let us assume that $G$ is generated by two nontrivial elements $a,b$.

As noted above,  an upper bound for the cogrowth $\rho$ is $4$. 
%The cogrowth of the free group on two generators  is ...
%which gives a lower bound of ... for $\rho$.
We can compute lower bounds for the number of returns, and thus the cogrowth, as follows.

Consider the following sequence of  finite connected subgraphs, $\mathcal{G}_N$ of $N$ vertices that
contain the identity. 
%For example, one could take subgraphs to be the balls
%of various radii centred at the identity --- ie $B(R) = \{g \in V(\mathcal{G})
% \;|\; \gamma(g) \leq R \}$ where $\gamma(g)$ is the graph distance from the
%identity to $g$. We can then define $r_{N,n}$ be the number of returns of
%length $n$ in $\mathcal{G}_N$. 

%Explicity we construct  graphs $\mathcal G_N$ from the Cayley graph for $(G,S)$ as follows. 

Set $\mathcal G_1$ to be the identity vertex. 
Record the list of edges incident to $\mathcal G_1$. Define $\mathcal G_2,\mathcal G_3, \ldots$ by appending edges from this list, one at a time.
Once the list is exhausted (so $\mathcal G_N=B(1)$), repeat the process.
It follows that for each $\mathcal G_N$ there is an $R$ so
that $B(R) \subseteq \mathcal{G}_N \subseteq B(R+1)$. 

We can then define $r_{N,n}$ be the number of returns of length $n$ in $\mathcal{G}_N$. 
Since $\mathcal G_N\subset \mathcal G_{N+1}$, the sequence $\{r_{N,n}\}$ is supermultiplicative, so 
 $\displaystyle \rho_N = \limsup_{n \to \infty} r_{N,n}^{\nicefrac{1}{n}}$ exists by Fekete's lemma. Further
we must have $r_n \geq r_{N,n}$ and so $\rho \geq \rho_{N}$. Hence we can bound
$\rho$ by computing $\rho_N$.

Using the Perron-Frobenius theorem (in one of its many guises ---
Proposition V.7 from \cite{Flajolet2009} for example) the growth rate $\rho_N$
of such paths on $\mathcal{G}_N$ is given by the dominant eigenvalue of the
corresponding adjacency matrix, provided it is irreducible. We construct
$\mathcal{G}_N$ so that it is connected and so the corresponding
adjacency matrix is be irreducible. 

In some cases we can also demonstrate that the adjacency matrix is aperiodic,
which implies that the dominant eigenvalue is simple and dominates all other
eigenvalues. This also implies that the corresponding generating function $\sum
p_{N,n}z^n$ has a simple pole at the reciprocal of that eigenvalue.
Unfortunately many of the matrices we study are not aperiodic, but they do have
period $2$. 

Perhaps the easiest way to prove that the matrix is aperiodic is to
show the existence of two circuits of relatively prime length (see
chapter V.5 in \cite{Flajolet2009} for example). In order to show that a matrix
has period 2 it suffices (providing the matrix is finite) to show that if there is a
path of length $k$ between any two nodes, then there is a path of length
$k+2\ell$ between those two nodes for any $\ell$. 

It follows that the adjacency matrix for $\mathcal G_N$ is aperiodic whenever the group $G$ has a presentation with an odd length relator (since $aa^{-1}$ and the odd length relator form circuits of relatively prime lengths) and if all relators have even length, the matrix has period 2 (since a path of length $k$ can be made into a path of length $k+2\ell$ by inserting $(aa^{-1})^{\ell}$).

In particular we have that 
subgraphs of  Baumslag-Solitar groups  $BS(p,q)$ %\cong \langle a, b \ | \ ba^pb^{-1}=a^q\rangle $ 
with $p+q$ odd are aperiodic, while subgraphs of $BS(p,q)$ with $p+q$ even, Thompson's group $F$, $\mathbb Z^2$, $\mathbb{Z} \wr
\mathbb{Z}$ and $F_2$ (the free group on 2 generators), all with the usual generating sets, have period $2$.

Since all of above groups except $\mathbb Z^2$
grow exponentially, and $B(R) \subseteq \mathcal{G}_N \subseteq B(R+1)$, then  the radius of $\mathcal{G}_N$ is $O(\log N)$. In the case of $\mathbb Z^2$ the radius of  $\mathcal{G}_N$ is $O(\sqrt N)$

%While the above method works reasonably well, we found that we obtained significantly better
%%bounds by considering only freely reduced words; ie paths that did not contain
%immediate reversals.

We used this method to compute  $\rho_N$ for a selection of groups. However, we found significantly better
bounds by considering only freely reduced words, {\em i.e.} paths that did not contain immediate reversals, essentially since there is less to count.

%\textbf{I changed this last sentence. Note we don't give any results for this subsection}

\subsection{Bounding the cogrowth}
Let $p_n$ be the number of returns of length $n$ on $\mathcal{G}$ which do not
contain immediate reversals. We similarly define $p_{N,n}$ to be similar paths
on the subgraph $\mathcal{G}_N$. Again we define the exponential growth of
these quantities by
\begin{align*}
  \alpha &= \limsup_{n \to \infty} p_n^{\nicefrac{1}{n}} &
  \alpha_N &= \limsup_{n \to \infty} p_{N,n}^{\nicefrac{1}{n}} 
\end{align*}
and $\alpha \geq \alpha_N$. 

In this case, we cannot now simply concatenate two freely reduced paths to
obtain another freely reduced path since it may create an immediate reversal.
Thus we do not have similar supermultiplicative relations. We can, however,
relate $r_n$ to $p_n$ and $\rho$ to $\alpha$ using the following result of Kouksov
\cite{Kouksov1998} which we have specialised to the case of 2 generator groups.
%%%
\begin{lem}(from \cite{Kouksov1998})
\label{lem:RC}
Let $R(z) = \sum r_n z^n$ and $C(z) = \sum p_n z^n$ be the generating functions
of returns and freely reduced returns respectively. Then
\begin{align*}
C(z) &= \frac{1-z^2}{1 + 3 z^2}R\left(\frac{z}{1 + 3 z^2}\right) &\mbox{and
equivalently}\\
R(z) & = \frac{-1+2\sqrt{1 -12z^2}}{1 - 16 z^2}C\left(\frac{1-\sqrt{1 -
12 z^2}}{6 z}\right).
\end{align*}
%\begin{align*}
%C(z) &= \frac{1-z^2}{1 + 3 z^2}R\left(\frac{z}{1 + 3 z^2}\right)
% \end{align*}
%and
%equivalently
%\begin{align*}
%R(z) & = \frac{-1+2\sqrt{1 -12z^2}}{1 - 16 z^2}C\left(\frac{1-\sqrt{1 -12 z^2}}{6 z}\right).
%\end{align*}
\end{lem}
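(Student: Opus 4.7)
The plan is to lift walks on the Cayley graph of $G$ to walks on the universal cover $\mathcal{T}$, which is the Cayley graph of the free group $F_2$ --- the infinite $4$-regular tree. Fix a basepoint $0 \in \mathcal{T}$ above the identity. Lifting is a length-preserving bijection between returns in $G$ and walks in $\mathcal{T}$ from $0$ to some vertex of the kernel $K \le F_2$ of the presentation map; moreover, freely reduced returns correspond to the (unique) geodesic from $0$ to a vertex of $K$, since free reduction of a word is precisely the removal of backtracks in a tree. Letting $W_d(z)$ denote the generating function for walks between two vertices of $\mathcal{T}$ at distance $d$ (well defined by vertex transitivity), we obtain
\[
R(z) = \sum_{v \in K} W_{d(v)}(z), \qquad C(z) = \sum_{v \in K} z^{d(v)}.
\]

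The crux is to prove
\[
W_d(z) = T(z) \cdot \bigl(z S(z)\bigr)^d,
\]
where $T(z)$ counts walks from a vertex of $\mathcal{T}$ back to itself and $S(z)$ counts walks from the root back to itself in a rooted tree whose root has degree $3$ and whose other vertices have degree $4$. A standard excursion decomposition gives $S = 1/(1-3z^2 S)$ and $T = 1/(1-4z^2 S)$, yielding $S(z) = (1-\sqrt{1-12z^2})/(6z^2)$ and $T(z) = (-1+2\sqrt{1-12z^2})/(1-16z^2)$. For the factorisation of $W_d$ I would induct on $d$: writing the geodesic as $0 = p_0, p_1, \ldots, p_d$, any walk from $0$ to $p_d$ splits uniquely at its first arrival at $p_1$ into (i) a walk from $0$ to $0$ avoiding $p_1$, which is a return in $\mathcal{T}$ with the edge $0 \to p_1$ deleted and so is counted by $S(z)$; (ii) the single step $0 \to p_1$ (factor $z$); and (iii) a subsequent walk from $p_1$ to $p_d$, with GF $W_{d-1}(z)$ by induction. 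Summing $R(z) = \sum_{v \in K} W_{d(v)}(z)$ then gives $R(z) = T(z) \cdot C(zS(z))$, which is exactly the second displayed formula of the lemma upon substituting the closed forms for $T(z)$ and $zS(z)$.

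For the first formula, invert the substitution $y = z S(z)$. From $y = zS$ and the defining quadratic $3z^2 S^2 - S + 1 = 0$, a short manipulation yields $z = y/(1+3y^2)$, and then $T(z) = 1/(1-4zy) = (1+3y^2)/(1-y^2)$. The identity $C(y) = R(z)/T(z)$ then reads $C(y) = \frac{1 - y^2}{1+3y^2} R\bigl(\frac{y}{1+3y^2}\bigr)$.

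The main obstacle is verifying the first-passage decomposition cleanly, in particular that returns at $0$ avoiding the neighbour $p_1$ are counted by $S(z)$; this holds because deleting the edge $0 \to p_1$ leaves $0$ adjacent to three rooted $3$-branches, so such returns decompose as sequences of excursions into these branches, recovering the functional equation $S = 1/(1-3z^2 S)$. Everything else is routine algebra.
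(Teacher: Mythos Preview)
Your proposal is correct and follows essentially the same strategy as the paper's sketch: decompose an arbitrary return by using its free reduction as a backbone and filling in with tree excursions, which at the generating-function level is the substitution $z\mapsto zS(z)=\frac{1-\sqrt{1-12z^2}}{6z}$. Your treatment is in fact more careful than the paper's one-line sketch --- by using the first-passage decomposition $W_d=(zS)^dT$ you make explicit where the prefactor $T(z)=\frac{-1+2\sqrt{1-12z^2}}{1-16z^2}$ comes from, which the paper's ``replace each edge'' description leaves implicit.
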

A careful generating function argument gives the second equation (and the first
is simply its inverse). Consider any freely reduced returning path; it can be
mapped to an infinite set of returning paths by replacing each edge $s$ by any
returning path in the free group on 2 generators that does start with $s^{-1}$.
At the level of generating functions, this is exactly the substitution $$\displaystyle z
\mapsto \frac{1-\sqrt{1 - 12 z^2}}{6 z}.$$

A very general result for generating functions then links the dominant
singularity of $R(z)$ to the value of $\rho$:
\begin{thm}(\cite{Flajolet2009}, page 240.)
\label{thm:bowtie}
If $f(z)$ is analytic at $0$ and $\rho$ is the modulus of a singularity nearest
to the origin, then the coefficient $f_n = [z^n]f(z)$ satisfies:
\begin{equation*}
  \limsup_{n\to\infty} |f_n|^{-\nicefrac{1}{n}} =  \rho
\end{equation*}
\end{thm}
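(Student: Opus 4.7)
My plan is to recognize this as a reformulation of the Cauchy--Hadamard formula for the radius of convergence of a power series, combined with the classical identification of the radius of convergence with the distance from the origin to the nearest singularity.

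First I would expand $f$ as a power series around $0$. Since $f$ is analytic at $0$ by assumption, we have $f(z)=\sum_{n\geq 0} f_n z^n$ on some disk, and the series has some radius of convergence $R \in (0,\infty]$. The first step is to show $R=\rho$. To see $R \geq \rho$: on the open disk $|z|<\rho$ there is, by definition of $\rho$, no singularity of $f$, so $f$ extends as an analytic function there, and any function analytic on an open disk around $0$ has a power series expansion converging throughout that disk; hence the radius of convergence is at least $\rho$. To see $R\leq \rho$: by hypothesis there is a singularity $z_0$ with $|z_0|=\rho$, and the power series of an analytic function cannot converge on an open disk properly containing a singularity of the function (otherwise the analytic extension given by the series would contradict the singularity at $z_0$), which forces $R\leq \rho$.

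Next I would invoke the Cauchy--Hadamard theorem, which states
\begin{equation*}
\frac{1}{R} \;=\; \limsup_{n\to\infty} |f_n|^{\nicefrac{1}{n}}.
\end{equation*}
Substituting $R=\rho$ and taking reciprocals inside the $\limsup$ (which interchanges $\limsup$ and $\liminf$ for positive sequences) rewrites the relation in the form
\begin{equation*}
\rho \;=\; \bigl(\limsup_{n\to\infty} |f_n|^{\nicefrac{1}{n}}\bigr)^{-1}
\;=\; \liminf_{n\to\infty} |f_n|^{-\nicefrac{1}{n}},
\end{equation*}
which is the statement of the theorem (up to the standard interpretation that terms with $f_n=0$ are omitted, so that lacunary sequences such as those arising from period-$2$ adjacency matrices do not spuriously contribute $+\infty$).

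The only mildly delicate step is the argument that $R\leq\rho$: one must be sure that a convergent power series really does furnish an analytic extension on the open disk of convergence and that this extension is unique, so that a putative radius of convergence exceeding $\rho$ would contradict the presence of a singularity at modulus $\rho$. This is standard complex analysis (the disk of convergence lies in the domain of analyticity, and analytic continuation is unique on connected domains), so no real difficulty arises. The Cauchy--Hadamard step itself is a direct estimate: the geometric series test shows convergence whenever $|z|\limsup |f_n|^{1/n}<1$ and divergence whenever the product exceeds $1$, pinning down $1/R$ exactly.
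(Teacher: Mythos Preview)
The paper does not prove this statement; it is quoted from Flajolet--Sedgewick (their Theorem~IV.7, the ``exponential growth formula'') and used as a black box. Your argument is exactly the standard one behind that result: identify the radius of convergence $R$ with the distance $\rho$ to the nearest singularity, and then apply Cauchy--Hadamard.

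One point deserves more care. You correctly arrive at
\[
\rho \;=\; \bigl(\limsup_{n\to\infty} |f_n|^{1/n}\bigr)^{-1} \;=\; \liminf_{n\to\infty} |f_n|^{-1/n},
\]
but the displayed statement reads $\limsup_{n\to\infty} |f_n|^{-1/n} = \rho$. These need not coincide even after discarding the zero terms (consider coefficients alternating between two different geometric rates), so your parenthetical about lacunary sequences does not actually close the gap. In fact the displayed form is a transcription slip: in Flajolet--Sedgewick the assertion is $\limsup |f_n|^{1/n} = 1/\rho$, which is precisely what you proved and precisely what the paper needs downstream in Corollary~\ref{cor:transfer}. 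So your argument is correct for the intended statement; just say plainly that you are establishing $\limsup |f_n|^{1/n}=1/\rho$ rather than trying to reconcile it with the literal $\limsup |f_n|^{-1/n}$ formulation.
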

Combining these two results (and using the positivity of $r_n, p_n$) we obtain
\begin{cor}
\label{cor:transfer}
The constants $\rho$ and $\alpha$ are related by
\begin{align*}
  \rho  = \frac{\alpha^2+3}{\alpha}
\end{align*}
Further if $\beta$ is a lower bound for $\alpha$, then $\displaystyle \beta' = \frac{\beta^2
+3}{\beta}$ is an lower bound for $\rho$.
\end{cor}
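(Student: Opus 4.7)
The plan is to apply Theorem~\ref{thm:bowtie} to both generating functions and transport singularities through the functional equation in Lemma~\ref{lem:RC}. Since $r_n,p_n \ge 0$, Pringsheim's theorem places the dominant singularity of each series on the positive real axis: at $z=1/\rho$ for $R$, and at $w=1/\alpha$ for $C$. My goal is therefore to identify the dominant positive real singularity of $R$ using the identity
\begin{equation*}
R(z)=\frac{-1+2\sqrt{1-12z^2}}{1-16z^2}\,C(\phi(z)),\qquad \phi(z)=\frac{1-\sqrt{1-12z^2}}{6z}.
\end{equation*}

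First I would invert $\phi$: writing $w=\phi(z)$, clearing denominators as $6zw=1-\sqrt{1-12z^2}$ and squaring, a short simplification gives $z=w/(1+3w^2)$, matching the substitution in the first equation of Lemma~\ref{lem:RC}. Hence the singularity of $C$ at $w=1/\alpha$ is pulled back to a candidate singularity of $R$ at $z_\alpha := \alpha/(\alpha^2+3)$.

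Next I would enumerate the remaining positive real singularities on the right-hand side of the displayed identity. Besides $z_\alpha$, the candidates are the branch point of $\sqrt{1-12z^2}$ at $z=1/\sqrt{12}$ and the zero of $1-16z^2$ at $z=1/4$. The latter is in fact removable: multiplying numerator and denominator by $-1-2\sqrt{1-12z^2}$ reduces the algebraic prefactor to $3/(1+2\sqrt{1-12z^2})$, which is analytic on a neighbourhood of $1/4$ and whose only positive real singularity is the branch point at $1/\sqrt{12}$. The elementary inequality $(\alpha-\sqrt{3})^2 \ge 0$ rearranges to $z_\alpha \le 1/\sqrt{12}$, so $z_\alpha$ is indeed the dominant positive real singularity of $R$. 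Thus $1/\rho = z_\alpha$, giving $\rho=(\alpha^2+3)/\alpha$. For the lower bound claim, set $g(x)=x+3/x$; then $g'(x)=1-3/x^2 \ge 0$ on $[\sqrt{3},\infty)$, and the preceding inequality already places $\alpha$ in this regime, so any $\beta \le \alpha$ (with $\beta \ge \sqrt 3$, as will be the case for the bounds we compute) yields $g(\beta) \le g(\alpha)=\rho$, as required. The main obstacle is the singularity bookkeeping in this last paragraph: one has to verify both that the apparent pole at $z=1/4$ truly cancels and that $z_\alpha$ beats the surviving branch point at $1/\sqrt{12}$, since otherwise the claimed equality $1/\rho=z_\alpha$ could fail.
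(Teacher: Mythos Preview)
Your proof is correct and follows exactly the approach the paper indicates: combine Lemma~\ref{lem:RC} with Theorem~\ref{thm:bowtie}, using positivity (Pringsheim) to locate the relevant singularities on the positive real axis. The paper gives no further detail beyond that one sentence, so your write-up actually fills in what the paper omits---in particular the check that the apparent pole at $z=1/4$ is removable (via the rationalisation to $3/(1+2\sqrt{1-12z^2})$) and the comparison $z_\alpha\le 1/\sqrt{12}$ ensuring the pulled-back singularity dominates the branch point. Your added caveat $\beta\ge\sqrt{3}$ for the monotonicity of $g(x)=x+3/x$ is a genuine point the paper does not mention; without it the ``lower bound'' claim can fail, so this is a welcome clarification rather than a deviation.
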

We are unable to prove a similar exact relationship between $\rho_N$ and
$\alpha_N$, but we do have the following bound:
\begin{lem}
\label{lem:rhoNalphaN}
For a fixed value of $N$ we have $\displaystyle \rho_N \leq \frac{\alpha_N^2 + 3}{\alpha_N}$.
\end{lem}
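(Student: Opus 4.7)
The plan is to emulate the proof of Corollary~\ref{cor:transfer}, substituting a coefficient-wise inequality for Kouksov's exact identity. Write $R_N(z) = \sum_n r_{N,n} z^n$ and $C_N(z) = \sum_n p_{N,n} z^n$, and abbreviate $f(z) = (-1+2\sqrt{1-12z^2})/(1-16z^2)$ and $T(z) = (1-\sqrt{1-12z^2})/(6z)$ as in Lemma~\ref{lem:RC}. I aim to establish the coefficient-wise inequality
\begin{equation*}
  [z^n]\,R_N(z) \;\leq\; [z^n]\, f(z)\, C_N\!\bigl(T(z)\bigr) \qquad \text{for all } n \geq 0,
\end{equation*}
and then read off the lemma from Theorem~\ref{thm:bowtie}.

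The combinatorial step is the crux. The bijection underlying Kouksov's proof expresses every return in the full Cayley graph $\mathcal{G}$ as its freely reduced form decorated at each position by a returning $F_2$-excursion, and the identity $R = f\cdot C\circ T$ is the enumeration of this bijection. The key observation for the truncated version is that free reduction cannot take a walk out of $\mathcal{G}_N$: cancelling an $ss^{-1}$ substring simply excises the midpoint vertex it visits, so the set of vertices visited only shrinks. Hence every return in $\mathcal{G}_N$ has its reduced form in $\mathcal{G}_N$. Conversely, decorating a freely reduced return in $\mathcal{G}_N$ with arbitrary $F_2$-excursions yields a return in $\mathcal{G}$ that may well leave $\mathcal{G}_N$, so the returns in $\mathcal{G}_N$ form a subset of these decorated objects, and repeating Kouksov's bookkeeping with the restricted class of reduced returns gives the displayed inequality.

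The singularity analysis is then routine. A short expansion at $z=1/4$ shows that the numerator and denominator of $f(z)$ vanish to the same order there, so the apparent pole is removable and both $f$ and $T$ are analytic on $|z|<1/\sqrt{12}$; on the positive real axis $T$ bijects $[0,1/\sqrt{12}]$ onto $[0,1/\sqrt{3}]$ with inverse $t\mapsto t/(1+3t^2)$. The only candidates for a singularity of $f(z)\,C_N(T(z))$ are therefore the branch point $z=1/\sqrt{12}$ and, when $\alpha_N\geq\sqrt{3}$, the point $z^\ast = \alpha_N/(\alpha_N^2+3)$ at which $T(z)=1/\alpha_N$. If $\alpha_N\geq\sqrt{3}$ then $1/\sqrt{12}-z^\ast = (\alpha_N-\sqrt{3})^2/[\sqrt{12}(\alpha_N^2+3)]\geq 0$, so the dominant singularity is $z^\ast$ and Theorem~\ref{thm:bowtie} gives $\rho_N\leq 1/z^\ast = (\alpha_N^2+3)/\alpha_N$. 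If $\alpha_N<\sqrt{3}$ then $\rho_N\leq 2\sqrt{3}$, and AM-GM applied to $\alpha_N$ and $3/\alpha_N$ gives $(\alpha_N^2+3)/\alpha_N\geq 2\sqrt{3}$, so the bound still holds. The main obstacle is the combinatorial inequality, whose validity rests on the stability of $\mathcal{G}_N$ under free reduction.
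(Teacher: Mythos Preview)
Your proof is correct and follows essentially the same strategy as the paper's: both establish the coefficient-wise (equivalently, functional) inequality $R_N(z) \leq f(z)\,C_N(T(z))$ via Kouksov's decoration bijection, using the fact that decorating freely reduced returns in $\mathcal{G}_N$ with $F_2$-excursions produces a superset of all returns in $\mathcal{G}_N$. You make explicit the key point the paper leaves implicit---that free reduction only removes vertices and so cannot push a path outside $\mathcal{G}_N$---and you carry out the singularity analysis (removability of $z=1/4$, Pringsheim plus the $\alpha_N<\sqrt{3}$ case via AM--GM) that the paper compresses into ``it suffices to show\ldots''; otherwise the arguments coincide.
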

\begin{proof}
 Consider the generating functions of returns and freely reduced returns on
$\mathcal{G}_n$.
\begin{align*}
 R_N(z) &= \sum_{n\geq0} r_{N,n}z^n & C_N(z) &= \sum_{n\geq0} p_{N,n}z^n
\end{align*}
It suffices to show that 
\begin{align*}
C_N(z) & \geq \frac{1-z^2}{1 + 3 z^2}R_N\left(\frac{z}{1 + 3 z^2}\right) &
\mbox{or equivalently }\\
R_N(z) & \leq \frac{-1+2\sqrt{1 -12z^2}}{1 - 16 z^2}C_N\left(\frac{1-\sqrt{1 -
12 z^2}}{6 z}\right)
\end{align*}
within the respective radii of convergence. Let $\omega$ be any freely reduced
returning path of length $k$ in $\mathcal{G}_N$ --- this path contributes $z^k$
to the generating function $C_N(z)$. The substitution $\displaystyle z \mapsto \frac{1-\sqrt{1
- 12 z^2}}{6 z}$ maps $\omega$ to an infinite set of non-reduced returning paths
by replacing each edge with freely reduced words from $F_2$. Some of the
resulting words will lie entirely within $\mathcal{G}_N$ and so be enumerated by
the generating function $R_N(z)$. However an infinite number of these words
will not be contained in $\mathcal{G}_N$. These words are enumerated by
$$ \frac{-1+2\sqrt{1 -12z^2}}{1 - 16 z^2}C_N\left(\frac{1-\sqrt{1 -
12 z^2}}{6 z}\right)$$ but not by $R_N(z).$ Thus the inequality follows.
\end{proof}
%\textbf{This is more of a proof sketch --- not sure how much we really need to give it? --- seems okay}

To compute $\alpha_N$ we relate it to the dominant eigenvalue of an adjacency
matrix. Unfortunately there is no simple way to reuse the adjacency matrix of
$\mathcal{G}_N$, in order to enumerate paths without immediate reversal. Instead
we construct a new graph $\mathcal H_N$ which encodes freely reduced paths in $\mathcal G$ as follows:
$\mathcal H_N$ has $N$ vertices labeled by pairs $(1,-)$ or $(g,s)$ where $g\in G$ and $s\in S$. The vertex $(1,-)$ corresponds to being at the identity vertex of $\mathcal G$, and $(g,s)$  to being at the group element $g\in\mathcal G$ after having just read a letter $s$.
The edges of $\mathcal{H}_N$ are
\begin{align}
  E(\mathcal{H}_N) &= \{ ((g,s),(h,t)) \in ( V(\mathcal{H}_N) )^2 \;|\;
 h=gt \mbox{ and } st \neq 1  \}
\end{align}
%that contains only reduced paths between vertices, as follows.
%where the vertex $(g,s)$ corresponds to being at the group element $g$ in the Cayley graph after having just read a letter $s$.
%Issues: paths starting at $1$ -- look like $(1,-),(s,s),(st,t),\ldots$.
So a path $(1,-), (g_1,s_1),(g_2,s_2),\dots (g_k,s_k)$ corresponds to a path in $\mathcal G$ starting at $1$ with $g_1=s_1,g_2=s_1s_2,\ldots g_k=s_1s_2\ldots s_k$ a freely reduced word.

%that decorates the vertices of $\mathcal{G}_N$ with
%the generator used by the freely reduced path to arrive at that vertex in the algorithm to construct $\mathcal G_N$.

%\textbf{There could be several freely reduced paths to a single vertex -- so you mean in the particular way that we construct $\mathcal G_N$, with the breadth first search....  I got stuck here}.

%Consider some path on $\mathcal{G}_N$ --- we can write this path as a sequence
%of ordered pairs of group elements and generators $(g_0,s_0), (g_1,s_1),
%(g_2,s_2),\dots (g_k,s_k)$, where $g_i \in V(\mathcal{G}_N)$ and $s_i \in S=
%\{a,a^{-1},b,b^{-1}\}$. The freely-reduced condition then becomes $s_i \neq
%s_{i+1}^{-1}$. Thus we construct a new graph $\mathcal{H}_N$ 

 We construct $\mathcal{H}_N$ using a breadth-first
search similar to the construction of $\mathcal G_N$, starting with $\mathcal H_1=(1,-)$ and appending vertices one at a time so that $\{g\in G \ | \ (g,s)\in \mathcal H_N\}$ lies between two balls of a given radius.
It follows that  $\mathcal{H}_N$  is necessarily connected,  and the corresponding adjacency
matrices are irreducible.

%\textbf{Question -- this time, what do we say about balls.?}

We then compute the growth rate of paths (and so freely reduced returns) on
$\mathcal{H}_N$ by computing the dominant eigenvalue of the corresponding
adjacency matrix.

\subsection{Exact lower bounds}

Since each node of $\mathcal{G}_N$ has outdegree at most $4$ and those 
of the $\mathcal{H}_N$ excluding $(1,-)$ have outdegree at most $3$ (vertices on the boundary may
have smaller degree) the corresponding adjacency matrices are sparse. We found
that the power method and Rayleigh quotients (see \cite{Saad2003} for example)
converged very quickly to the dominant eigenvalue and so the growth rate.

We constructed $\mathcal{G}_N$ and $\mathcal{H}_N$ for many different values
of $N$ ranging between $10^2$ and $10^7$. Our calculations on Thompson's group
$F$ as well as the Baumslag-Solitar groups $BS(2,2), BS(2,3)$ and $BS(3,5)$,
yielded the following result.
\begin{thm}
\label{thm:rigorous_bounds}
The following are exact lower bounds on the cogrowth, $\alpha$, of the indicated
groups.
 \begin{align*}
    BS(2,2) \geq 2.5904 && BS(2,3) \geq 2.42579 && BS(3,5) \geq 2.06357 \\
    \mbox{Thompson's group} \geq 2.17329
 \end{align*}
This implies that the growth rate of all trivial words, $\rho$, in these groups
are bounded as indicated.
 \begin{align*}
     BS(2,2) \geq 3.78522 && BS(2,3) \geq 3.66250 && BS(3,5) \geq 3.51736 \\
    \mbox{Thompson's group} \geq 3.55368
 \end{align*}
\end{thm}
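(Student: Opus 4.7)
The plan is to turn each of the four bounds into a rigorously certified eigenvalue computation on a finite graph $\mathcal{H}_N$, and then read off the corresponding bound on $\rho$ via Corollary~\ref{cor:transfer}. By the construction of $\mathcal{H}_N$ and by Perron--Frobenius, $\alpha_N$ is the spectral radius of the non-negative, irreducible, integer adjacency matrix $A_N$ of $\mathcal{H}_N$, and $\alpha \geq \alpha_N$ by Lemma~\ref{lem:rhoNalphaN} (combined with $\alpha \geq \alpha_N$ holding directly from $p_n \geq p_{N,n}$). So it suffices, for each of the four groups, to exhibit some $N$ together with a certified lower bound on the spectral radius of $A_N$.

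First I would build $\mathcal{H}_N$ by breadth-first search starting from $(1,-)$ and appending vertices $(g,s)$ one at a time, as described in the previous subsection. This requires an effective word problem: for Thompson's group $F$ we can use the standard tree-pair diagram normal form, and for the Baumslag--Solitar groups we can use Britton's normal form. Whenever a candidate vertex $(g,s)$ is appended, an edge $((h,t),(g,s))$ is inserted for every already-constructed $(h,t)$ with $ht = g$ and $ts \neq 1$. The edge set therefore grows monotonically with $N$, which is what makes $\alpha_N$ monotone in $N$ and ensures the final bound is valid.

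To certify the spectral radius of the (very large, sparse) non-negative matrix $A_N$, I would run power iteration to produce an approximate positive Perron vector $v$, and then apply the Collatz--Wielandt bound
\begin{equation*}
  \alpha_N \;=\; \lambda_{\max}(A_N) \;\geq\; \min_{i}\frac{(A_N v)_i}{v_i},
\end{equation*}
which is valid for any strictly positive $v$. Evaluating the right-hand side in rational or interval arithmetic converts an approximate floating-point eigenvector into a mathematically rigorous lower bound on $\alpha_N$, and hence on $\alpha$. The digits displayed in the theorem are then simply the truncation of this certified value. Once a certified $\alpha \geq \beta$ is in hand, the bound $\rho \geq (\beta^2+3)/\beta$ follows immediately from Corollary~\ref{cor:transfer}, since the map $\beta \mapsto \beta + 3/\beta$ is monotone increasing for $\beta > \sqrt{3}$ and all four values of $\beta$ comfortably exceed this threshold.

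The main obstacle I anticipate is scale. The values of $N$ needed to obtain bounds of the stated quality are on the order of $10^6$--$10^7$, so both the breadth-first construction and the power iteration must exploit sparsity very carefully, and the word-problem routine (particularly for $F$) must be tuned so that the dominant cost is the eigenvalue computation rather than the graph construction. A secondary subtlety is periodicity: since $F$, $BS(2,2)$ all have only even-length relators, the matrices $A_N$ are of period $2$, so power iteration should be applied to $A_N^2$ (or to a shifted matrix) to avoid oscillation between the two eigenspaces before the Collatz--Wielandt step is applied. For $BS(2,3)$ and $BS(3,5)$ the matrices are aperiodic by the odd-relator argument recorded earlier, and power iteration converges directly.
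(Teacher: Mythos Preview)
Your proposal is correct and follows essentially the same route as the paper: build $\mathcal{H}_N$ by breadth-first search using a solution to the word problem, compute the dominant eigenvalue of its adjacency matrix by power iteration (the paper phrases the certification step as ``Rayleigh quotients'' rather than Collatz--Wielandt, but for these non-negative matrices the intent is the same), use $\alpha\geq\alpha_N$, and then push through Corollary~\ref{cor:transfer}. One small slip: the relator of $BS(3,5)$ has length $2+3+5=10$, so by the paper's parity criterion its adjacency matrix also has period~$2$, and your remedy of iterating $A_N^2$ (or shifting) applies there as well; only $BS(2,3)$ among the four is aperiodic.
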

Note that all of these bounds were computed using information from
$\mathcal{H}_N$ and Corollary~\ref{cor:transfer}. We observed that the bounds
obtained from $\mathcal{G}_N$ were worse --- typically differing in the second
or third significant digit. We also note that the above result for $BS(2,3)$ is
improves on a result in   \cite{Dykema2010} (the preprint was withdrawn by the authors since it contained an error).
%lower bound of $\approx 3.536\dots$ obtained in  \cite{Dykema2010}
%using very different methods. \textbf{The authors of that work have since
%withdrawn the preprint due to an error that they are working to correct. ``The
%preprint is now withdrawn, because of an error in the technique. (Certain words
%were counted twice.) However, we are working on producing a correct argument and
%we are able to obtain a nontrivial lower bound.''} 

%They refer to an earlier general paper with estimates.... we can say we improved that

These computations were done on a desktop computer using about $4$Gb of
memory. It should be noted that while our techniques require both exponential
time and memory, it was memory that was the constraining factor. We did
implement some simple space-saving methods. Perhaps the most effective of these
was to store elements as geodesic words in the generators rather than as their
more standard normal forms (eg tree-pairs for Thompson's group or words in the
normal form implied by Britton's lemma for the Baumslag-Solitar groups). These
geodesic words could then be stored as bit-strings rather than ASCII strings. We
believe that by running these computations on a computer with more memory we
could improve the bounds, but the returns are certainly diminishing.

\subsection{Extrapolation and comparison}
The results of the previous section can be extended by considering the
sequence of lower bounds $\alpha_N$ and using simple numerical
methods to extrapolate them to $N \to \infty$. This required only minimal
changes to our computations; after computing the adjacency matrix of
$\mathcal{H}_N$ for the maximal value of $N$, we computed the dominant
eigenvalue of submatrices. The corresponding estimate of the eigenvector was
then used as an initial vector for estimating the eigenvalue of the next
submatrix. This meant that we could compute a sequence of lower bounds in not
much more time than it took to compute our best bounds.

In Figure~\ref{fig ev nonamen} we have plotted $\alpha_N$ against
$\nicefrac{1}{\log N}$ for three non-amenable  Baumslag-Solitar groups and $F$.
\begin{figure}[ht!]
 \begin{center}
    \includegraphics[height=8cm]{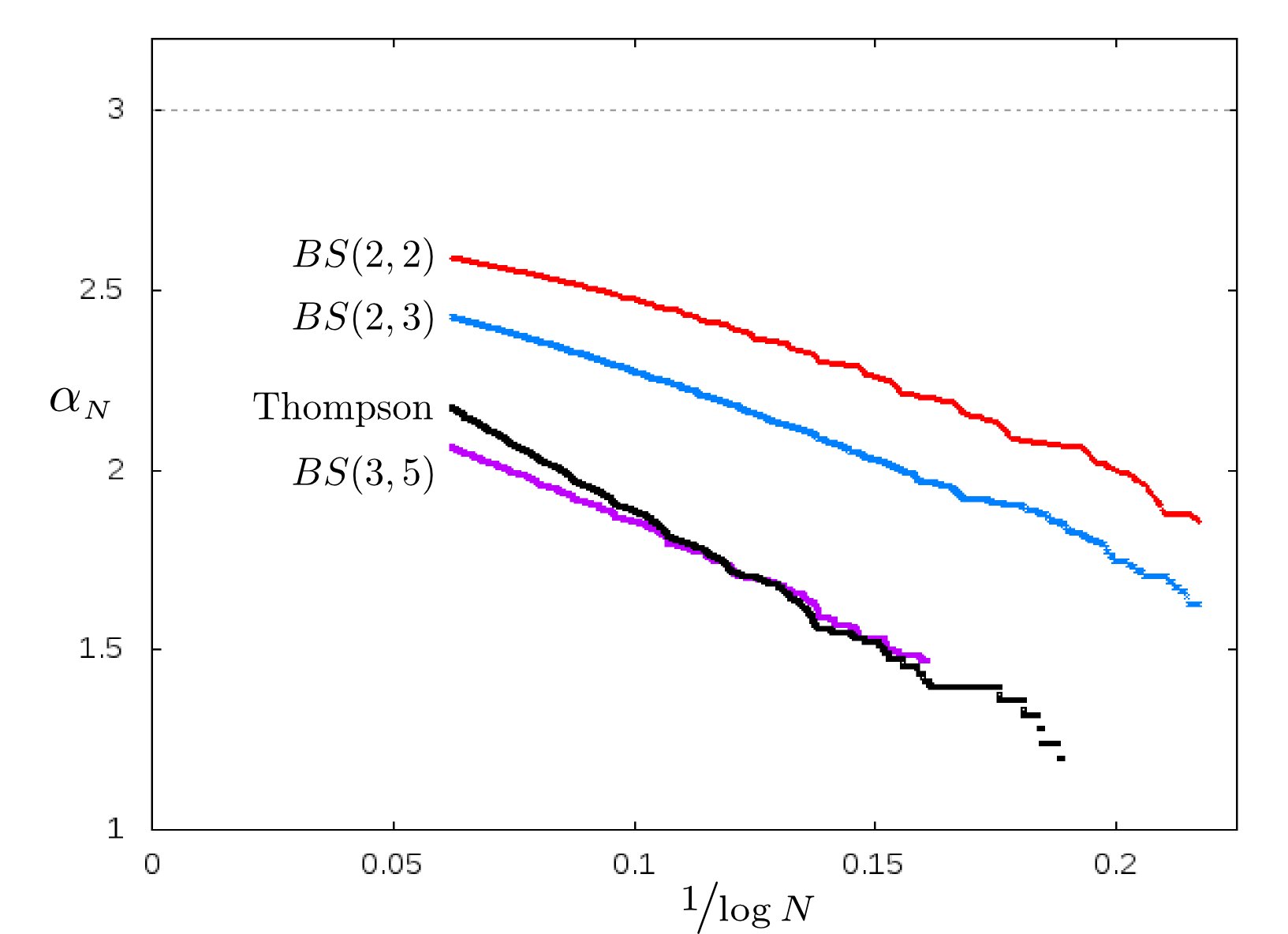}
 \end{center}
  \caption{A plot of cogrowth lower bounds $\alpha_N$ against
$\nicefrac{1}{\log N}$. We see that the groups that known to be non-amenable
are converging to numbers strictly below $3$. The Thompson's group sequence has
a clear upward inflection (as $N \to \infty$ or $\nicefrac{1}{\log N} \to 0$)
and so it is difficult to estimate whether the limit is $3$ or less than $3$.}
 \label{fig ev nonamen}
\end{figure}
 We found that this gave approximately linear plots and
so this suggests that 
\begin{align*}
  \alpha_N \approx \alpha_\infty - \nicefrac{\lambda}{\log N}.
\end{align*}
Since $\alpha_N$ is a monotonically increasing sequence 
and is bounded above by $3$, we have that $\alpha_N \to \alpha_\infty$
exists. Unfortunately we cannot prove that $\alpha_\infty = \alpha$, but
certainly $\alpha_\infty \leq \alpha$.

Note that the curves terminate at $N=10^7$ but start at different $N$-values.
This is because the graphs $\mathcal{G}_N$ do not contain freely-reduced loops
for small values of $N$. The smallest value of $N$ for which
$\mathcal{G}_N$ contains a freely reduced loop depends on the length of the
relations of the group and on the details of the breadth-first search used to
construct the graph. 

One can observe that the Baumslag-Solitar groups all seem to behave similarly
and that the sequences of bounds are clearly converging to constants strictly
less than $3$. This is completely consistent with the non-amenability of these
groups. Thompson's group behaves quite differently --- in particular we see that
the curve has some upward inflection (as $x \to 0$) and it makes it very unclear
as to whether or not $\alpha_\infty$ converges to 3 or below 3.

For the sake of comparison we decided to repeat the above analysis for a set of
amenable groups and so we computed similar sequences of lower bounds for
$BS(1,2), BS(1,3), \mathbb{Z}^2$ and $\mathbb{Z}\wr\mathbb{Z}$. These results
are plotted in Figure~\ref{fig ev amen}.
\begin{figure}[ht!]
 \begin{center}
  \includegraphics[height=8cm]{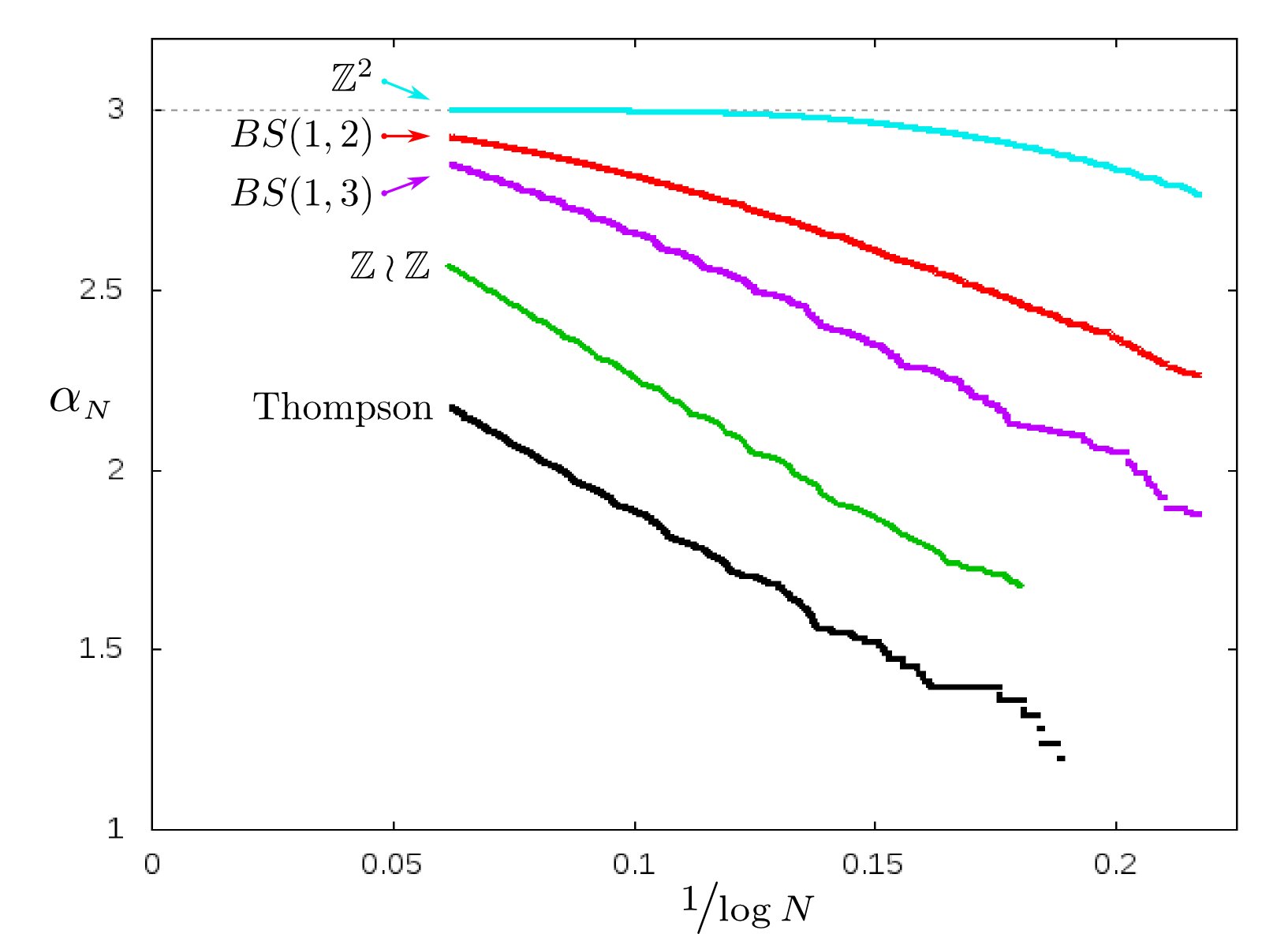}
 \end{center}
  \caption{A plot of cogrowth lower bounds $\alpha_N$ against $\nicefrac{1}{\log
N}$. We see that the groups that known to be amenable are clearly converging to
$3$.  Again we see that Thompson's group behaves quite
differently.}
 \label{fig ev amen}
\end{figure}

Note that no sequence gives a perfectly straight line and so to estimate
$\alpha_\infty$ we fitted the data to the form
\begin{align*}
  \alpha_n &= \alpha_\infty + \nicefrac{\lambda}{(\log N)^\delta}.
\end{align*}
We varied the number of data points by removing small-$N$ points and we also
varied the value of $\delta$. For any fixed number of points we varied $\delta$
to find a value that minimised the $R^2$ statistic. This gives an ``optimal''
value of $\alpha_\infty$ and $\lambda$.

For some groups, we found that these optimal values were quite sensitive to
changes in $\delta$, while other groups were quite robust. To include some
measure of this systematic error we moved $\delta$ through a range
of values so that the $R^2$ statistic was allowed to move to $5\%$ below its
optimal value. These results are summarised in Tables~\ref{tab fit nonamen}
and~\ref{tab fit amen}.

The results for all the groups \emph{except} Thompson's group are as one
might expect --- the amenable groups all give estimates of $\alpha_\infty$ close
to $3$, and the non-amenable groups give $\alpha_\infty < 3$. Hence it would
appear as though this technique is a reasonable test to differentiate amenable
and non-amenable groups. Unfortunately it is not sufficiently sensitive to
determine the amenability of Thompson's group. In particular we find that the
results are too sensitive to variations in $\delta$ and to removal of low-$N$
data points.
A possible reason for this atypical behaviour is the presence of nested wreath products 
which converge very slowly to their asymptotic behaviour.

Because of this, we turn to numerical methods based on random sampling and
approximate enumeration.

\begin{table}[ht!]
\begin{center}
\begin{tabular}{||c|c||c|c|c||}
\hline
\hline
Group&Number&Optimal & $\delta$ range& $\alpha_\infty$
estimate\\
&of points&$R^2$ Value& & \\
\hline
\hline
\multirow{2}{*}{$BS(2,2)$} 
 & 4501 & $0.998$ & $1.74\pm0.05$ & $2.682\pm0.007$ \\
 & 2500 & $0.998$ & $1.85\pm0.13$ & $2.672\pm0.009$ \\
\hline
\multirow{2}{*}{$BS(2,3)$} 
 & 4501 & $0.999$ & $1.36\pm0.04$ & $2.597\pm0.012$ \\
 & 2500 & $0.999$ & $1.57\pm0.07$ & $2.562\pm0.009$ \\
\hline
\multirow{2}{*}{$BS(3,5)$} 
& 4101 & $0.998$ & $1.33\pm0.05$ & $2.29\pm0.01$ \\
& 2000 & $0.998$ & $1.65\pm0.19$ & $2.24\pm0.03$ \\
\hline
\multirow{3}{*}{$F$} 
 & 3947 & $0.998$ & $0.83\pm0.07$ & $2.79\pm0.08$ \\
 & 2000 & $0.998$ & $0.93\pm0.16$ & $2.69\pm0.12$ \\
 & 1700 & $0.998$ & $0.65\pm0.21$ & $2.95\pm0.38$ \\
\hline
\hline
\end{tabular}
\end{center}
\caption{Results of fitting eigenvalue data for non-amenable groups and Thompson's group. The
Baumslag-Solitar groups all give good results, but Thompson's group does not.
There is some upward drift in the estimate of $\alpha_\infty$ as one cuts out
small $N$ data, but at the same time the error in the estimates blows up.}
\label{tab fit nonamen}
\end{table}

\begin{table}[ht!]
\begin{center}
\begin{tabular}{||c|c||c|c|c||}
\hline
\hline
Group&Number&Optimal & $\delta$ range& $\alpha_\infty$
estimate\\
&of points&$R^2$ Value& & \\
\hline
\hline
\multirow{2}{*}{ $BS(1,2)$ }
 & 4501 & $0.99975$ & $1.7316\pm0.0225$ & $3.0158\pm0.0031642$ \\
 & 2500 & $0.99981$ & $1.9472\pm0.0552$ & $2.9975\pm0.0031542$ \\
\hline
\multirow{2}{*}{  $BS(1,3)$ }
& 4501 & $0.99894$ & $1.354\pm0.046$ & $3.0722\pm0.016473$ \\
 & 2500 & $0.99855$ & $1.54\pm0.151$ & $3.0261\pm0.026664$ \\
\hline
\multirow{2}{*}{ $\mathbb{Z}^2$ }
 & 4501 & $0.99613$ & $5.1624\pm0.1134$ & $3.002\pm0.000364$ \\
 & 2500 & $0.99932$ & $10.996\pm0.154$ & $3\pm 1.7248 \times 10^{-6}$ \\
\hline 
\multirow{2}{*}{ $\mathbb{Z}\wr\mathbb{Z}$ }
& 3947 & $0.99925$ & $0.8592\pm0.0436$ & $3.1807\pm0.050903$ \\
& 2000 & $0.99915$ & $1.0237\pm0.1251$ & $3.0476\pm0.082052$ \\
\hline 
\multirow{3}{*}{$F$}
 & 3947 & $0.99796$ & $0.83\pm0.072$ & $2.7866\pm0.083778$ \\
 & 2000 & $0.99869$ & $0.9344\pm0.1548$ & $2.6917\pm0.12318$ \\
 & 1700 & $0.99848$ & $0.6464\pm0.2016$ & $2.9532\pm0.38051$ \\
\hline
\hline
\end{tabular}
\caption{Results of fitting eigenvalue data for amenable groups  and Thompson's group. All the
amenable groups give good results quite close to $3$, though $\mathbb{Z} \wr
\mathbb{Z}$ is not as good as the others. Also note that since balls in
$\mathbb{Z}^2$ grow quadratically with radius rather than exponentially, better
results can be obtained by fitting against $\displaystyle \nicefrac{1}{N^\delta}$ rather than
$\displaystyle \nicefrac{1}{(\log N)^\delta}$.}
\label{tab fit amen}
\end{center}
\end{table}

\subsection{An aside --- cogrowth series}
As a byproduct of our computations we obtained the first few terms of the
cogrowth series for all of these groups. It is well know that the number of
trivial words in $\mathbb{Z}^2$ is given by $\binom{2n}{n}^2$ (see A002894 \cite{Sloane}); the corresponding
generating function is not algebraic and is expressible as a complete elliptic
integral of the first kind. The number of trivial words in $F_2$ is just the
number of returning paths in a quadtree and its generating function is
$3(1+2\sqrt{1-12z^2})^{-1}$ (see A035610 \cite{Sloane}). 

Unfortunately we have been unable to find (using tools such as GFUN
\cite{Salvy1994}) any useful explicit or implicit expressions for the
cogrowth series (or the generating functions) for any of the other groups we
have examined. For completeness we include our data in Table~\ref{tab enum}.
\begin{table}[ht!]
\begin{tabular}{|c||c|c|c|c|c|c|c|}
\hline
n&  $F$ %Thompson 
& BS(1,2) & BS(1,3) & BS(2,2)& BS(2,3) & BS(3,5) & $\mathbb{Z} \wr
\mathbb{Z}$ \\
\hline
0&1&1&1&1&1&1&1 \\
1&0&0&0&0&0&0&0 \\
2&0&0&0&0&0&0&0 \\
3&0&0&0&0&0&0&0 \\
4&0&0&0&0&0&0&0 \\
5&0&10&0&0&0&0&0 \\
6&0&0&12&12&0&0&0 \\
7&0&20&0&0&14&0&0 \\
8&0&64&40&40&0&0&16 \\
9&0&96&0&0&28&0&0 \\
10&20&338&264&224&60&20&72  \\
11&0&736&0&0&84&0&0 \\
12&64&2052&1604&1236&240&64&272 \\
13&0&5208&0&0&564&0&0 \\
14&336&13336&9748&7252&1090&280&1504 \\
15&0&36330&0&0&2760&0&0 \\
16&1160&92636&61720&41192&6492&1048&8576 \\
17&0&248816&0&0&13496&0&0  \\
18&5896&665196&412072&247272&33728&4660&46080 \\
19&0&1771756&0&0&75768&0&0 \\
20&24652&4776094&2750960&1491136&174760&17964&257160 \\
21&0&12848924&0&0&411234&0&0 \\
22&117628&34765448&18725784&9119452&958364&77508&1475592 \\
\hline
\end{tabular}
\caption{The first few terms of the cogrowth series $C(z)$ for various groups, {\em 
i.e.} the number of freely reduced words equivalent to the identity. The first few
terms of the returns series $R(z)$ can be obtained from the above using
Lemma~\ref{lem:RC}.}
\label{tab enum}
\end{table}

\section{Distribution of geodesic lengths}

In this section we broaden our study from the  growth rate of  trivial words to the
distribution of geodesic lengths of all words by sampling random words. In
previous work of Burillo {\em et al} \cite{Burillo2007}, random words in Thompson's group $F$ were
sampled using simple sampling; words were grown by appending  generators
one-by-one uniformly at random. Those authors observed only very trivial words
and so then sampled uniformly at random from a subset of those words, namely the
set of words with balanced numbers of each generator and their inverses. Again,
very few trivial words were observed. Indeed if Thompson's group is
non-amenable, the probability of observing a trivial word using simple sampling
will decay exponentially quickly.

We will proceed along a similar line but using a more powerful random sampling
method based on flat-histogram ideas used in the FlatPERM algorithm
\cite{Prellberg2004, Prellberg2006}. Each sample word is grown in a similar
manner to simple sampling --- append one generator at a time chosen uniformly at
random. The weight of a word of $n$ symbols is simply $1$, so that the
total weight of all possible words at any given length is just $4^n$. As the
word grows we keep track of its geodesic length. We now deviate from simple
sampling by ``pruning'' and ``enriching'' the words.

Consider a word of length $n$, geodesic length $\ell$ and weight $W$. If we
have ``too many'' samples of such words, then with probability $\nicefrac{1}{2}$
prune the current sample or otherwise continue to grow the current sample
but with weight $2W$. Similarly if we have ``too few'' samples of the current
length and geodesic length, then enrich by making $2$ copies of the current
word and then growing a sample from both each with weight $\nicefrac{W}{2}$. Of
course, one is free to play around with the precise meaning of ``too few'' or
``too many''. We refer the reader to \cite{Prellberg2004, Prellberg2006} for
more details on the implementation of this algorithm. The mean weight
(multiplied by $4^n$) of all samples of length $n$ and geodesic length $\ell$,
$c_{n,\ell}$,  is then an estimate of the number of such words.

In order to run the above algorithm we need to be able to compute the geodesic
length of the element generated by a given random word. Computing geodesic
lengths from a normal form is, in general, a very difficult problem and remains
stubbornly unsolved for many interesting groups, such as $BS(2,3)$. Because of
this we restrict our studies to Thompson's group and a number of different
wreath products.
\begin{itemize}
 \item Thompson's group --- a method for computing the geodesic length of an
element from its tree-pair representation was first given by Fordham
\cite{Fordham2003}, though we found it easier to implement the method of Belk
and Brown \cite{Belk2005}.
 \item Wreath products --- we use the results of \cite{Cleary2005} to find the
geodesic lengths in $\mathbb{Z} \wr \mathbb{Z}$,
$\mathbb{Z}\wr(\mathbb{Z}\wr\mathbb{Z})$ and $\mathbb{Z} \wr F_2$.
\end{itemize}
We note that the geodesic problem for Baumslag-Solitar groups has
recently been solved in the cases $BS(1,n)$ \cite{Elder2009} and $BS(n,kn)$
\cite{Diekert2011}, but we have not implemented these approaches. 

\subsection{Distributions}
We used the random sampling algorithm described above to estimate the
distribution of geodesic lengths in Thompson's group $F$, as well as
$\mathbb{Z} \wr \mathbb{Z}$, $\mathbb{Z} \wr F_2$ and $\mathbb{Z} \wr
(\mathbb{Z} \wr \mathbb{Z})$. Each run took approximately 1 day on a modest
desktop computer. To visualise the results, we started by normalising the data
by dividing by the total number of words ({\em i.e.} $4^n$ or $6^n$). The resulting
peak-heights still decay with length, and we found that multiplying by
$\sqrt{n}$ compensated for this. The normalised distributions are plotted in
Figures~\ref{fig dist tgrpf},~\ref{fig dist zwrz} and~\ref{fig dist zwrothers}.

In each case we see similar behaviour. At short word lengths ({\em i.e.} small $n$) the
distribution of geodesic lengths is quite wide, but settles to what appears to
be a bell-shaped distribution at moderate lengths. This suggests that the
geodesic length has an approximately Gaussian distribution about the mean
length and that the tails of the distribution are exponentially suppressed. 
This also explains why the normalising factor of $\sqrt{n}$ works well.

If this is indeed the case, then we expect that trivial words, having geodesic
length zero, will be exponentially fewer than $4^n$ --- implying that
Thompson's group is non-amenable. Unfortunately things cannot be so simple,
because the same reasoning would imply that $\mathbb{Z} \wr \mathbb{Z}$ is
non-amenable. 

One obvious difference between the graphs is the movement of the peak of the
distribution, that is the rate of growth of the mean geodesic length. It is
clear that the mean geodesic length of $\mathbb{Z} \wr F_2$ grows linearly, and
so the group has a nontrivial rate of escape --- exactly as one would expect
of a non-amenable group. Similarly we see that the mean geodesic lengths
of the other wreath products grow sublinearly, so their rates of escape are
zero. When we examine the movement of the peak of Thompson's group's
distribution, things are less clear; the mean geodesic length appears to be very
nearly linear.

\begin{figure}[ht!]
 \begin{center}
  \includegraphics[height=8cm]{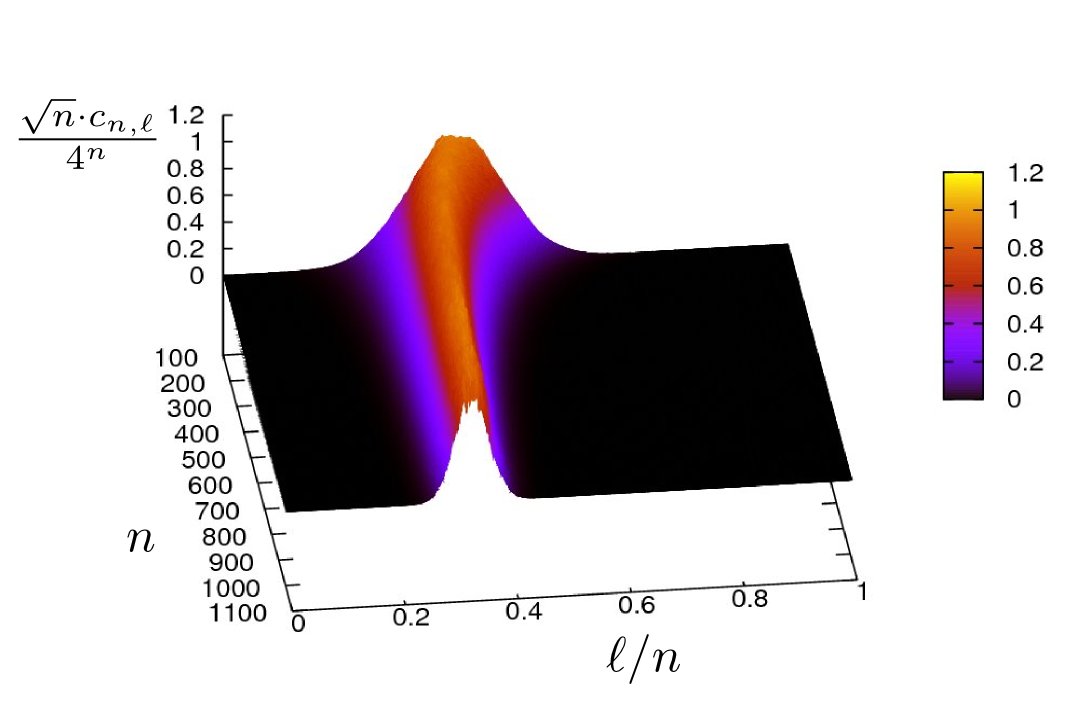}
 \end{center}
  \caption{A plot of the normalised distribution of the number of words 
$c_{n,\ell}$ of length $n$ and geodesic length $\ell$ in Thompson's group $F$.
Notice that the peak position is quite stable, indicating that the mean
geodesic length grows roughly linearly with word length.}
 \label{fig dist tgrpf}
\end{figure}

\begin{figure}[ht!]
 \begin{center}
  \includegraphics[height=8cm]{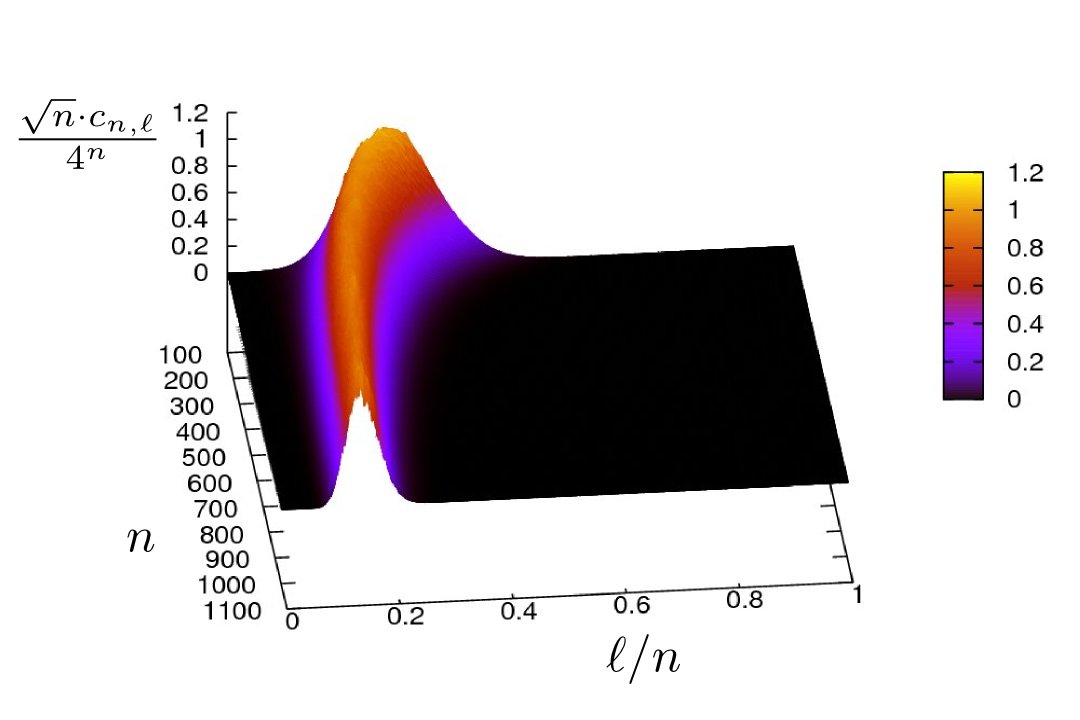}
 \end{center}
  \caption{A plot of the normalised distribution of the number of words 
$c_{n,\ell}$ of length $n$ and geodesic length $\ell$ in $\mathbb{Z} \wr
\mathbb{Z}$. Observe that the peak position is clearly moving towards the left
of the plot suggesting that the mean geodesic length grows sublinearly.}
 \label{fig dist zwrz}
\end{figure}

\begin{figure}[ht!]
 \begin{center}
  \includegraphics[width=7cm]{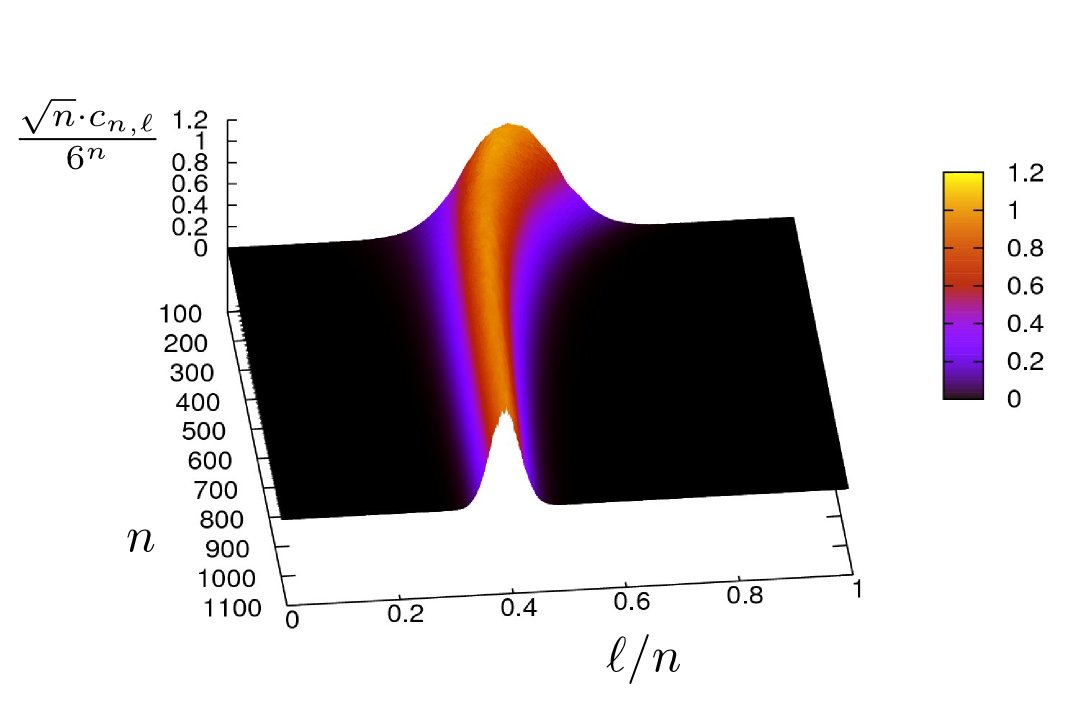}
  \includegraphics[width=7cm]{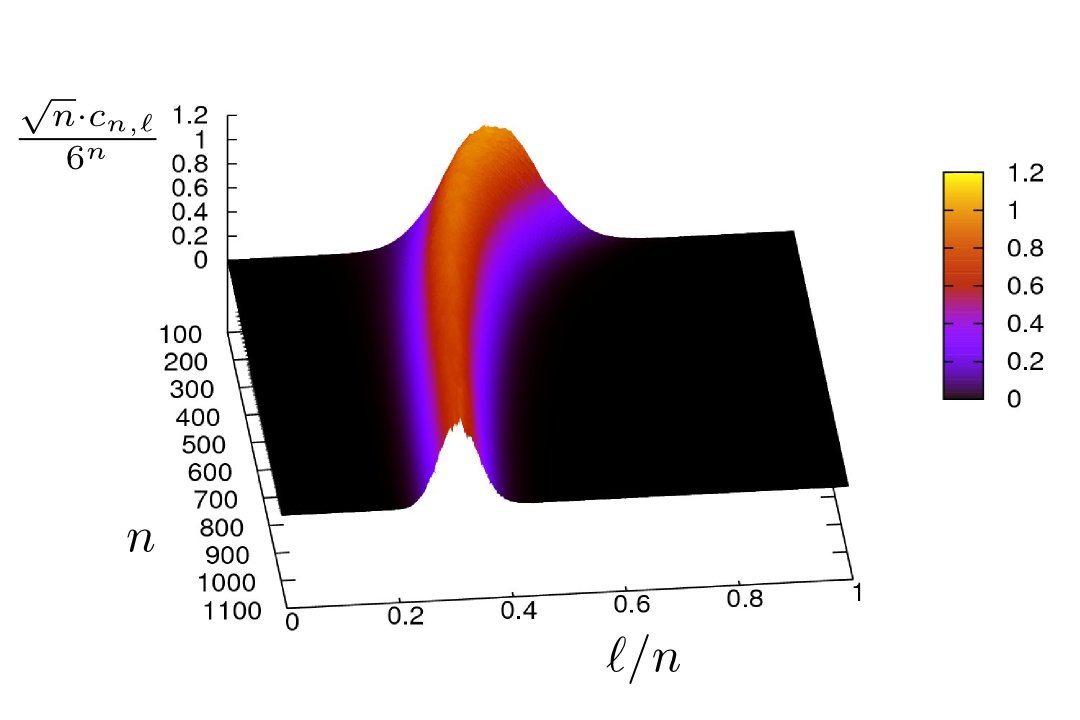}
 \end{center}
  \caption{Plot of the normalised distribution of the number of words 
$c_{n,\ell}$ of length $n$ and geodesic length $\ell$ in $\mathbb{Z} \wr F_2$
(left) and $\mathbb{Z} \wr (\mathbb{Z} \wr \mathbb{Z})$ (right). Observe that the
peak is quite stable in the left-hand plot indicating the mean geodesic length
is linear, while the right-hand plot the peak shows clear a left drift
indicating that the geodesics grow sublinearly.}
 \label{fig dist zwrothers}
\end{figure}

Estimating the mean geodesic length for Thompson's group was substantially
easier. We constructed $2^{12}$ random words of length $2^{16}$. As each word
was constructed generator-by-generator, the geodesic length was computed and
added to our statistics. So while there is correlation between the geodesic
lengths at different word lengths \emph{within} a given sample, there is no
correlation \emph{between} samples. This took approximately 3 days on a
modest desktop computer. Our data is plotted in Figure~6. 

\begin{figure}[ht!]
 \begin{center}
 \includegraphics[height=7cm]{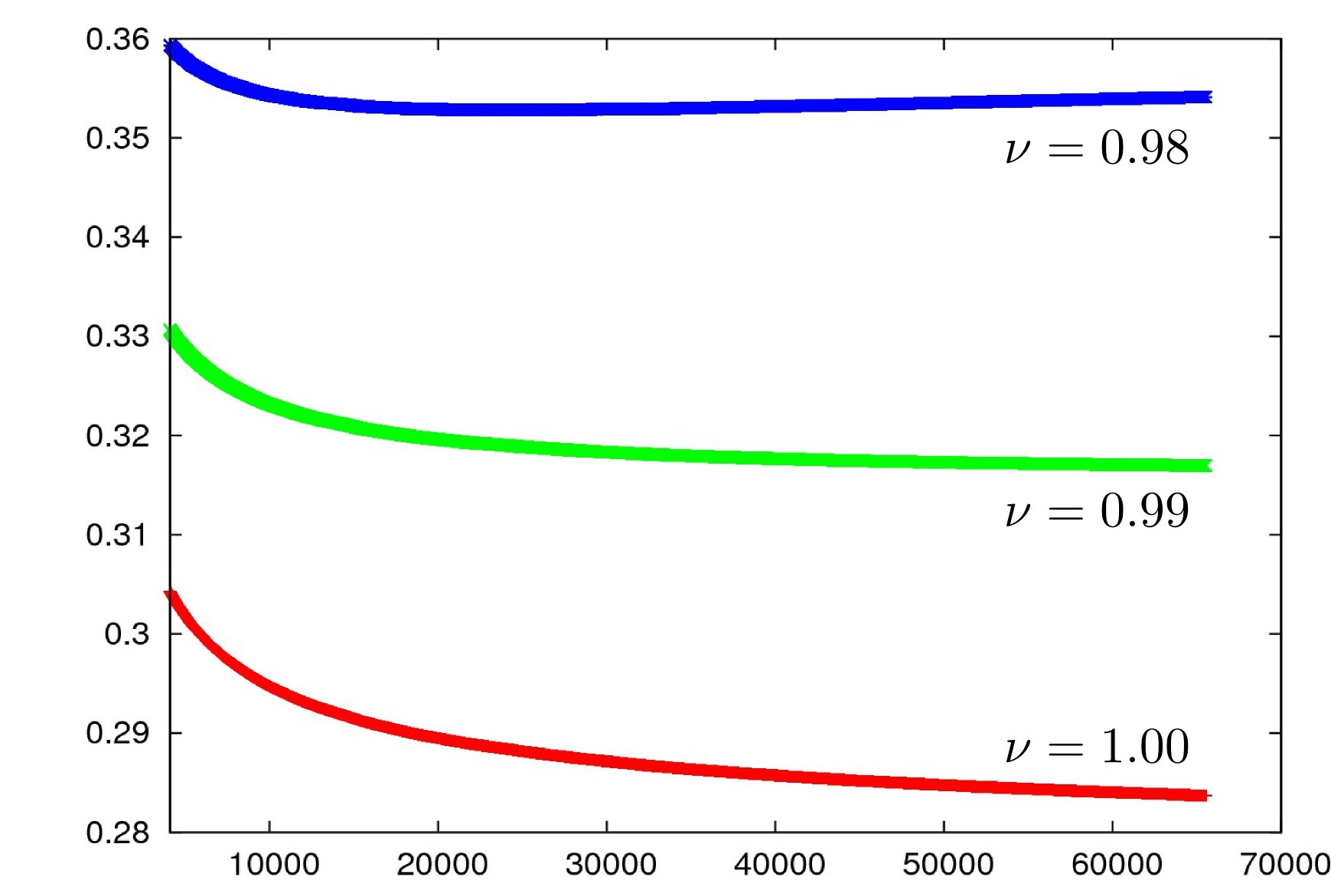}
 \end{center}
 \label{fig-mean-geods}
  \caption{Plot of the mean geodesic length divided by $n^\nu$; for $\nu = 0.98,
0.99$ and $1$. This data strongly suggests that Thompson's group has a
nontrivial rate of escape. Note that the statistical error was smaller than
the symbols used.}
\end{figure}

We assume that the mean geodesic length grows as $n^\nu$. Linear regression on
a log-log plot estimates $\nu \approx 0.98$. Further, if we fit a moving
``window'', we find that the local estimates of $\nu$ increase as the
positioning of the window increases. This strongly suggests that the mean
geodesic length grows linearly. 

To test linearity further, we generated a small number words of length
$2^{20} = 1048576$. It took approximately 1 hour to generate each word and
compute the corresponding geodesic length, so this was too slow to generate
meaningful statistics. In each case we observed that the ratio
$\nicefrac{\ell}{n}$ appeared to converge to approximately $0.28$.  Of course,
this does not preclude more exotic sublinear behaviour such as $n^\nu (\log
n)^\theta$. Such logarithmic corrections are extremely difficult to detect or
rule out.

We now estimate the rate of escape by assuming linear growth with a polynomial
subdominant correction term
\begin{align}
  \langle \ell \rangle_n = A n + b n^\delta.
\end{align}
Our estimates were quite sensitive to changes in $\delta$:
\begin{align}
\begin{array}{|c|c|c|c|c|c|c|c|}
\hline
 \delta & 0 & \nicefrac{1}{4} &\nicefrac{1}{3}  & \nicefrac{1}{2} &
\nicefrac{2}{3} & \nicefrac{3}{4} \\
\hline
  A & 0.281 & 0.279 & 0.279 & 0.276 & 0.272 & 0.267 \\
  b & 176 & 17 & 8.0 & 1.8 & 0.47 & 0.25\\
  \hline
\end{array}.
\end{align}
Hence we conclude that the rate of escape is approximately $0.27$ with an error of $\pm 0.01$.

We would like to conclude that this positive rate of escape implies that
Thompson's group is non-amenable, however there are examples of amenable groups
with nontrivial rate of escape. The  group $\mathbb{Z}^3\wr \mathbb{Z}_2$ is amenable but has positive rate of escape \cite{Revelle}. Unfortunately,
computing geodesics in this group is equivalent to solving the traveling
salesman problem on $\mathbb{Z}^3$ \cite{Parry1992} and so beyond these techniques.

\section{Conclusions}

We have computed exact lower bounds on the cogrowth of several groups including
Thompson's group $F$. In particular, the cogrowth ($\alpha$) of Thompson's group must be
greater than $2.17329$. By extrapolating the sequences of lower bounds we see
that the bounds for the amenable groups clearly converge to $3$, while those of
the non-amenable groups converge to numbers strictly less than $3$. Thompson's
group appears to behave quite differently from the other groups we examined. Our
extrapolations do not give clear results, though perhaps they point towards
non-amenability.

To further probe this group we used flat histogram methods to estimate the
distribution of geodesic lengths in random words. The data suggests that
geodesic lengths have an approximately Gaussian distribution about their mean
length. Similar Gaussian distributions were observed for other groups, both
amenable and non-amenable. 

The mean geodesic length of the amenable groups studied grow sublinearly, while
those of $\mathbb{Z} \wr F_2$ and Thompson's group are observed to grow
linearly. Using simple sampling we estimate that the mean geodesic length of
Thompson's group does indeed grow linearly  and that the rate of escape
is $0.27\pm 0.01$. 

\section{Acknowledgments}
We thank the anonymous reviewers for their helpful comments, and WestGrid for computer support.

\end{document}